\documentclass[11pt,amscd,amsfonts]{amsart}

\hoffset -.2in
\textwidth 5.5in
\textheight 8.5in
\pagestyle{plain}\pagenumbering{arabic}

\newtheorem{Theorem}{Theorem}[section]

\newtheorem{Proposition}[Theorem]{Proposition}

\newtheorem{Lemma}[Theorem]{Lemma}

\theoremstyle{remark}
\newtheorem{Example}[Theorem]{Example}

\def\re{{\mathbf {Re\,}}}

\begin{document}
\title{Pad\'e interpolation by $F$-polynomials and transfinite diameter}
\author{ Dan Coman and Evgeny A. Poletsky}
\thanks{Both authors are supported by NSF Grants.}
\subjclass[2010]{Primary: 30E10. Secondary: 41A21, 44A10, 11L07.}
\address{ Department of Mathematics,  215 Carnegie Hall,
Syracuse University,  Syracuse, NY 13244-1150, USA. E-mail:
dcoman@syr.edu, eapolets@syr.edu}

\begin{abstract} We define $F$-polynomials as linear combinations of dilations by some frequencies of an entire function $F$. In this paper we use Pad\'e interpolation of holomorphic functions in the unit disk by $F$-polynomials to obtain explicitly approximating $F$-polynomials with sharp estimates on their coefficients. We show that when frequencies lie in a compact set $K\subset\mathbb C$ then optimal choices for the frequencies of interpolating polynomials are similar to Fekete points. Moreover, the minimal norms of the interpolating operators form a sequence whose rate of growth is determined by the transfinite diameter of $K$. 
\par In case of the Laplace transforms of measures on $K$, we show that the coefficients of interpolating polynomials stay bounded provided that the frequencies are Fekete points. Finally, we give a sufficient condition for measures on the unit circle which ensures  that the sums of the absolute values of the coefficients of interpolating polynomials stay bounded.
\end{abstract}

\maketitle

\section{Introduction}\label{S:intro}
\par The problem of approximation of functions on the real line by exponential polynomials has a long history. It includes, for example, the theory of Fourier series and the spectral theory of functions. In both theories it was presumed that exponents in the polynomials are purely imaginary, although there was an interest in the cases when they are arbitrary complex numbers (see \cite{HNP}).

\par In this paper we are interested in the problem of approximation of holomorphic functions by $F$-polynomials, i.e. by functions of the form
\[f(z)=\sum_{j=1}^mc_jF(q_jz),\]
where $F$ is a (transcendental) entire function and $q_j$ are distinct complex numbers. The numbers $c_j$ are called the  coefficients of $f$, while the numbers $q_j$ are the frequencies of $f$. Let ${\mathcal F}$ denote the vector space of all $F$-polynomials $f$ endowed with the norm
\[\|f\|_\infty=\max_{1\leq j\leq m}|c_j|.\]
The most interesting case is when $F(z)=e^z$. The functions $f$ are then called exponential polynomials.

\par This problem was originated in a paper of A. O. Gelfond (see \cite{Ge} and \cite[Ch. IV, Theorem 18]{Le}), but it should be noted that all results listed above were concerned with the density of $F$-polynomials in appropriate spaces and there were no constructive methods for approximations of given functions. In this paper we use Pad\'e interpolation by $F$-polynomials to obtain
explicitly approximating $F$-polynomials with estimates for their coefficients.

\par Let $F$ be an entire function and set $F_n=F^{(n)}(0)$. We assume throughout the paper that there exists a constant $\Gamma\geq1$ so that
\begin{equation}\label{e:Fn}
1\leq|F_n|\leq\Gamma,\;\;\forall\,n\geq0.
\end{equation}
We will denote by $D_R\subset{\Bbb C}$ the open disk of radius $R$ centered at $0$, by $\overline D_R$ its closure, and by $\Delta$ the closed unit disk.

\par Let $g$ be a holomorphic function in $D_R$. Since $F_n\neq0$ for all $n$, it is easy to see that, given any $m$-tuple of distinct points $Q_m=(q_{1m},\dots,q_{mm})\in{\Bbb C}^m$, there exists a unique $F$-polynomial $f$ with frequencies in $Q_m$ which interpolates $g$ to order $m$ at $0$, i.e. $f^{(j)}(0)=g^{(j)}(0)$ for $0\leq j\leq m-1$. We denote this $F$-polynomial by $T_{F,Q_m}g$. We prove in Theorem \ref{T:PadeR} that, as $m\to+\infty$ and $Q_m\in(\overline D_M)^m$ for some fixed $M>0$, the Pad\'e-Taylor interpolants $T_{F,Q_m}g$ converge to $g$ locally uniformly in $D_R$ at the same rate as the Taylor polynomials of $g$ at $0$.

\par To estimate the coefficients of interpolating $F$-polynomials, we consider for a vector $q=(q_1,\dots,q_m)$ with distinct components $q_j\in\overline D_M$ the interpolation operator 
$$T_{F,q}:(A(\Delta),\|\cdot\|_\Delta)\longrightarrow({\mathcal F},\|\cdot\|_\infty),$$ 
where $A(\Delta)$ is the space of continuous functions on $\Delta$ which are holomorphic in the open unit disk, endowed with the uniform norm $\|\cdot\|_\Delta$. We let 
$$t(q)=\|T_{F,q}\|=\sup\{\|T_{F,q}g\|_\infty:\,g\in A(\Delta),\,\|g\|_\Delta\leq1\}.$$

\par We also look at 
\[\varepsilon(q)=\min\left\{\|f\|_\Delta:\,f(z)=\sum_{j=1}^mc_jF(q_jz),\,\|f\|_\infty=1\right\},\]
which is the reciprocal of the operator norm of the restriction of $T_{F,q}$ to the space of $F$-polynomials with frequencies in $q$. When $q=\{j+k\alpha:\,0\leq j+k\leq m\}$, $\alpha\in{\Bbb R}\setminus{\Bbb Q}$, we studied $\varepsilon(q)$ in \cite{CP3} to obtain sharp estimates for the uniform norms on the bidisk of polynomials in $\mathbb C^2$ whose uniform norm on the curve $K=\{(e^z,e^{\alpha z}):\,|z|\leq1\}\subset{\Bbb C}^2$ does not exceed 1. The behavior of $\varepsilon(q)$ exposed resonance conditions for $\alpha$ and has shown that the extension of the results in \cite{CP2} will meet significant difficulties. 

\par In Theorem \ref{T:mt1} we show that
$$\frac{e^{-M}(m-1)!}{\Gamma\gamma(q)}\leq\varepsilon^{-1}(q)\leq t(q)\leq \frac{e^M(m-1)!}{\gamma(q)}\,,$$
where $q=(q_1,\dots,q_m)\in(\overline D_M)^m$, 
\[\gamma(q)=\min_{1\le i\le m}\gamma_i(q)\,\;{\rm and }\;\gamma_i(q)=\prod_{j=1,j\ne i}^m|q_i-q_j|\,.\] 

\par To get better estimates for $\|T_{F,q}\|$ we assume that for all $m$ the arrays of nodes $q$ lie in a compact set $K\subset\mathbb C$. Let
$$t_m=t_m(K)=\min_{q\in K^m}t(q)$$
be the optimal operator norm of $T_{F,q}$ as $q\in K^m$, and let
\[\varepsilon_m=\varepsilon_m(K)=\max_{q\in K^m}\varepsilon(q).\]

\par One of the main results of this paper is that
$$\lim_{m\to+\infty}m\,\varepsilon_m^{1/m}=\lim_{m\to+\infty}m\,t_m^{-1/m}=e\,d(K),$$
where $d(K)$ is the transfinite diameter of $K$. This is the content of Theorem \ref{T:mt2}.

\par By analogy with Chebyshev polynomials, we can think about the above limit as an {\it exponential capacity} of the compact set $K$. In Proposition \ref{P:dp} we show that not only the exponential capacity is a constant multiple of the transfinite diameter, but also the distribution of points maximizing $\varepsilon(q)$ and optimizing $t(q)$ is similar to the distribution of Fekete points. We refer to Section \ref{S:prelim} for the necessary definitions.

\par Again by analogy with approximating functions on the real line, we can look at the interpolation of functions whose ``spectrum" lies in $K$, i.e. functions which are Laplace transforms of complex measures $\mu$ on $K$. We show in Theorem \ref{T:Laplace1} that if
\[g(z)=L_F\mu(z)=\int\limits_KF(\zeta z)\,d\mu(\zeta)\]
and $Q_m$ is an $m$-tuple of Fekete points, then $\|T_{F,Q_m}g\|_\infty\leq|\mu|(K)$.

\par Finally, we look at the inverse problem in attempt to identify the spectrum of $f$ by the size of the coefficients of its interpolating $F$-polynomials. For this we introduce on $\mathcal F$
another norm,
$$\|f\|_1=\sum_{j=1}^m|c_j|,\;{\rm where}\;f(z)=\sum_{j=1}^kc_jF(q_jz),$$
and the $q_j$'s are distinct. Theorem \ref{T:PadeR} implies that if $\|T_{F,Q_m}g\|_1$ is bounded for some subsequence $\{m_k\}$ then the spectrum of $g$ lies in $K$. In the case $K=\Delta$ we show in Proposition \ref{P:circle} that the converse is true for a large set of measures on the unit circle. However, examples show that there are such measures $\mu$ for which the sequence $\{\|T_{F,Q_m}(L_F\mu)\|_1\}$ is unbounded when $Q_m$ is an $m$-tuple of Fekete points.

\section{Preliminaries}\label{S:prelim}
\par We recall here a few facts needed for the proofs of our theorems. 

\subsection{Vandermonde matrices and symmetric polynomials} 
\par We will need a formula for the inverse of the Vandermonde matrix 
$$A_m(q)=\begin{bmatrix}1&1&\dots&1\\q_1&q_2&\dots&q_m\\
\vdots&\vdots&\dots&\vdots\\q_1^{m-1}&q_2^{m-1}&\dots&q_m^{m-1}
\end{bmatrix},$$
where $q=(q_1,\dots,q_m)$. This formula can be found in \cite{MS58}. For the sake of the reader we include here a shorter proof kindly presented to us by Mark Kleiner.

\begin{Proposition}\label{P:iov} We have 
$$A^{-1}_m(q)=[v_{ik}]_{1\leq i,k\leq m}\;,\;\;v_{ik}=\frac{(-1)^{m-k}s_{m-k}(q_1,\dots,q_{i-1},q_{i+1},\dots,q_m)}
{\prod_{j=1,j\neq i}^m(q_i-q_j)}\;,$$
where $s_l(y_1,\dots,y_t)$ denotes the elementary symmetric polynomial of
degree $l$ in the variables $y_1,\dots,y_t$.
\end{Proposition}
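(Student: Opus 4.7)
The plan is to verify directly that $A_m(q)V = I$, where $V=[v_{ik}]$ is the matrix from the statement, by recognizing the rows of $V$ as the coefficient vectors of the Lagrange interpolation polynomials at the nodes $q_1,\dots,q_m$. Since the columns of $A_m(q)$ are the power vectors $(1,q_i,\dots,q_i^{m-1})^T$, the product $A_m(q)V$ has $(j,k)$-entry $\sum_{i=1}^m q_i^{j-1}v_{ik}$, so the identity $A_m(q)V = I$ is equivalent to the interpolation identity
\[
z^{j-1} = \sum_{k=1}^m\Big(\sum_{i=1}^m q_i^{j-1}v_{ik}\Big) z^{k-1}, \qquad j=1,\dots,m.
\]

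First I would introduce the Lagrange basis
\[
L_i(z)=\prod_{j\neq i}\frac{z-q_j}{q_i-q_j}, \qquad i=1,\dots,m,
\]
which satisfies $L_i(q_r)=\delta_{ir}$. Expanding the numerator by Vieta's formulas gives
\[
\prod_{j\neq i}(z-q_j)=\sum_{k=1}^m(-1)^{m-k}s_{m-k}(q_1,\dots,q_{i-1},q_{i+1},\dots,q_m)\,z^{k-1},
\]
so dividing by the denominator exactly produces the numbers $v_{ik}$ as the coefficients of $L_i$; that is, $L_i(z)=\sum_{k=1}^m v_{ik}\,z^{k-1}$.

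Next, since $z^{j-1}$ is a polynomial of degree at most $m-1$, Lagrange interpolation at the distinct nodes $q_1,\dots,q_m$ yields
\[
z^{j-1}=\sum_{i=1}^m q_i^{j-1}L_i(z)=\sum_{k=1}^m\Big(\sum_{i=1}^m q_i^{j-1}v_{ik}\Big)z^{k-1}.
\]
Comparing coefficients of $z^{k-1}$ gives $\sum_{i=1}^m q_i^{j-1}v_{ik}=\delta_{jk}$, which is precisely $A_m(q)V=I$, so $V=A_m^{-1}(q)$.

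The only step that requires care is the sign bookkeeping in the Vieta expansion (ensuring the coefficient of $z^{k-1}$ in $\prod_{j\neq i}(z-q_j)$ matches $(-1)^{m-k}s_{m-k}$ of the right $m-1$ variables), but this is routine; I anticipate no conceptual obstacle.
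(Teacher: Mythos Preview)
Your proof is correct. It differs from the paper's in structure, though the two share the same core identity. The paper derives $v_{ik}$ by the adjugate formula $A^{-1}=(\det A)^{-1}C^T$: it computes the cofactor $c_{ki}$ by forming the determinant $\det A_m^i(x)$ (replace $q_i$ by a variable $x$), expanding it once along the $i$-th column to read off cofactors and once as $(-1)^{m+i}\det A_{m-1}(q^i)\prod_{j\ne i}(x-q_j)$, and then comparing coefficients. You instead \emph{verify} $A_m(q)V=I$ directly, recognizing the $i$-th row of $V$ as the coefficient vector of the Lagrange polynomial $L_i(z)=\prod_{j\ne i}(z-q_j)/\prod_{j\ne i}(q_i-q_j)$ via Vieta, and then invoking the exactness of Lagrange interpolation on $z^{j-1}$.

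Both arguments rest on the same Vieta expansion of $\prod_{j\ne i}(z-q_j)$. Your route is a bit more economical: it needs no determinant formulas and no cofactor bookkeeping, only the standard Lagrange identity. The paper's route, on the other hand, makes explicit the link between the entries $v_{ik}$ and the minors of $A_m(q)$, which is conceptually pleasant and sometimes useful elsewhere. Your caveat about sign bookkeeping is the right one and is indeed routine: in $\prod_{j\ne i}(z-q_j)=\sum_{l=0}^{m-1}(-1)^{m-1-l}s_{m-1-l}(q^i)z^{l}$, set $l=k-1$ to get the stated $(-1)^{m-k}s_{m-k}(q^i)$.
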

\begin{proof} Let $q^i=(q_1,\dots,q_{i-1},q_{i+1},\dots,q_m)$. For every invertible matrix $A$ we have $A^{-1}=(\det A)^{-1}C^T$, where $C=[c_{ij}]$ is the cofactor matrix of $A$. Hence it suffices to
compute the cofactor $c_{ki}$ of the element $a_{ki}=q_i^{k-1}$ of the
matrix $A_m(q)$. Denote by $A^i_m(x)$ the matrix obtained by replacing
$q_i$ with the independent variable $x$ in the $i$-th column of the
Vandermonde matrix. By the cofactor expansion formula
\[\det A^i_m(x)=\sum_{k=1}^{m}c_{ki}x^{k-1}.\]
\par On the other hand $\det A^i_m(x)$ is a polynomial of degree $m-1$ in $x$
with roots in the set $q^i$ and with leading coefficient equal to $(-1)^{m+i}\det A_{m-1}(q^i)$. Hence
$$\det A^i_m(x)=(-1)^{m+i}\det A_{m-1}(q^i)\sum_{k=1}^{m}(-1)^{m-k}s_{m-k}(q^i)x^{k-1}.$$
Therefore $c_{ki}=(-1)^{i-k}s_{m-k}(q^i)\det A_{m-1}(q^i)$. Recalling that
\[\det A_m(q)=\prod_{1\le i<j\le m}(q_j-q_i)\] we obtain
\[v_{ik}=\frac{c_{ki}}{\det A_m(q)}=\frac{(-1)^{m-k}s_{m-k}(q^i)}{\prod_{j=1,j\neq i}^m(q_i-q_j)}\;.\]
\end{proof}

\medskip

\par We will also need to consider the determinants of the following generalized Vandermonde matrices:
\begin{equation}\label{e:Amk}
A_m^{j,k}(q)=\begin{bmatrix}1&1&\dots&1\\q_1&q_2&\dots&q_m\\ \vdots&\vdots&\dots&\vdots\\q_1^{j-1}&q_2^{j-1}&\dots&q_m^{j-1}\\q_1^{j+1}&q_2^{j+1}&\dots&q_m^{j+1}\\ \vdots&\vdots&\dots&\vdots\\ q_1^{m-1}&q_2^{m-1}&\dots&q_m^{m-1}\\q_1^k&q_2^k&\dots&q_m^k
\end{bmatrix},
\end{equation}
where $q=(q_1,\dots,q_m)$, $k>m-1>j\geq0$, or $k\geq m-1= j$.

\par The function $q\longrightarrow\det A_m^{j,k}(q)/\det A_m(q)$ is a symmetric polynomial of degree $k-j$. We refer for instance to \cite[Ch. 1]{Man} for the definitions and basic properties of symmetric polynomials and Schur polynomials. The above function is the Schur polynomial $s_\lambda$ corresponding to the partition $\lambda=(k-m+1,1,\dots,1,0,\dots,0)\in{\Bbb N}^m$ of length $l(\lambda)=m-j$.  By Littlewood's theorem (see \cite[Theorem 1.4.1]{Man}) it has the form 
\begin{equation}\label{e:jsp}
\frac{\det A_m^{j,k}(q)}{\det A_m(q)}=s_\lambda(q)=\sum_T q^{\mu(T)},
\end{equation}
where the sum runs over all semistandard (Young) tableaux $T$ with shape $\lambda$ numbered with positive integers less than or equal to $m$, and $\mu(T)=(\mu(T)_1,\dots,\mu(T)_m)$ is the weight of $T$. Note that 
$$\mu(T)_1+\ldots+\mu(T)_m=|\lambda|=(k-m+1)+1+\dots+1=k-j,$$ 
where $|\lambda|$ is the weight of $\lambda$. The number of terms of $s_\lambda$ is computed using the formula in \cite[Corollary 1.4.11]{Man} and is given by 
\begin{equation}\label{e:njsp}
s_\lambda(1,\dots,1)=\frac{k!}{j!\,(m-j-1)!\,(k-m)!\,(k-j)}\,.
\end{equation}
In particular, when $j=m-1$ this function is equal to the complete symmetric polynomial $h_{k-m+1}$ of degree $k-m+1$ in $m$ variables ,
\begin{equation}\label{e:csp}
\frac{\det A_m^{m-1,k}(q)}{\det A_m(q)}=h_{k-m+1}(q)=\sum_{i_1+\dots+i_m=k-m+1,i_j\geq0}q_1^{i_1}q_2^{i_2}\dots q_m^{i_m},
\end{equation}
where the above summation has $k\choose{m-1}$ terms (see also the Jacobi-Trudi formulas \cite[p.13]{Man}). 

\medskip

\subsection{Transfinite diameter}
\par Given a compact set $K\subset{\Bbb C}$ and $m\geq1$, let 
$$V_m=V_m(K)=\max_{q\in K^m}|\det A_m(q)|=\max_{q\in K^m}\prod_{1\leq j<i\leq m}|q_i-q_j|.$$
A collection $\{Q_m\in K^m:\,m\geq1\}$ is called an array of Fekete points if for each $m$ we have $V_m=|\det A_m(Q_m)|$. The sequence $V_m^{\frac{2}{m(m-1)}}$ decreases to a number $d(K)$, which is called the transfinite diameter of $K$ (see e.g. \cite[Ch. VII]{G}). 

\par An alternative characterization of the transfinite diameter $d(K)$ can be given using the Chebyshev polynomials of $K$. Let 
$$\tau_m=\tau_m(K)=\left(\min\{\|p\|_K:\,p(z)=z^m+c_1z^{m-1}+\dots+c_m,\,c_j\in{\Bbb C}\}\right)^{\frac{1}{m}}.$$
Then $\lim_{m\to+\infty}\tau_m=d(K)$ (see e.g. \cite[Ch. VII]{G}).

\par We recall from \cite[Ch. VII]{G} that 
\begin{equation}\label{e:tau}
\tau^{m-1}_{m-1}\le\frac{V_m}{V_{m-1}}\le m\,\tau^{m-1}_{m-1}.
\end{equation}
We also note that for every $q\in K^m$ we have 
\begin{equation}\label{e:gamma}
\gamma(q)^m\leq\prod_{i=1}^m\gamma_i(q)=|\det A_m(q)|^2\leq V_m^2.
\end{equation}

\section{Pad\'e interpolation by $F$-polynomials}\label{S:Pade}

\par Recall that given a holomorphic function $g$ in the disk $D_R$ and an $m$-tuple $q$ of distinct points, we defined $T_{F,q}g$ to be the unique $F$-polynomial with frequencies in $q$ which interpolates $g$ to order $m$ at $0$. We have the following: 

\begin{Theorem}\label{T:PadeR} Let $q=(q_1,\dots,q_m)$ be an $m$-tuple of distinct points of $\overline D_M$ and let $g$ be a holomorphic function in $D_R$. If $|z|<r<R$ then 
$$|g(z)-T_{F,q}g(z)|\leq\left(\frac{|z|}{r}\right)^m\left(\frac{r}{r-|z|}+\Gamma e^{2Mr}\right)\max\{|g(z)|:\,|z|=r\}.$$
\end{Theorem}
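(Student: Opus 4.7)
The plan is to split the error using the Taylor polynomial $S_mg(z)=\sum_{k=0}^{m-1}g^{(k)}(0)z^k/k!$ of $g$:
\[g-T_{F,q}g=(g-S_mg)+(S_mg-T_{F,q}g).\]
This split is natural because $T_{F,q}g$ and $S_mg$ both agree with $g$ to order $m$ at $0$, so each summand vanishes to order $m$ at $0$. The first summand will account for the $r/(r-|z|)$ factor in the conclusion, and the second will account for the $\Gamma e^{2Mr}$ factor.

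The first piece is immediate. Cauchy's inequality on $|w|=r$ gives $|g^{(n)}(0)|/n!\leq M_r/r^n$ with $M_r=\max_{|w|=r}|g(w)|$, and summing the Taylor tail yields
\[|g(z)-S_mg(z)|\leq M_r\sum_{n\geq m}(|z|/r)^n=M_r\,(|z|/r)^m\,\frac{r}{r-|z|}.\]

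For the second piece I would expand $T_{F,q}g(z)=\sum_nF_na_nz^n/n!$ with $a_n=\sum_jc_jq_j^n$. The interpolation condition forces $a_n=g^{(n)}(0)/F_n$ for $n<m$, so
\[S_mg(z)-T_{F,q}g(z)=-\sum_{l\geq m}F_la_lz^l/l!.\]
The higher-order $a_l$ are computed by solving the Vandermonde system $A_m(q)c=(g^{(k)}(0)/F_k)_{k=0}^{m-1}$ using Proposition \ref{P:iov} and then evaluating $\sum_ic_iq_i^l$; this produces
\[a_l=\sum_{j=0}^{m-1}\frac{g^{(j)}(0)}{F_j}\cdot\frac{\det A_m^{j,l}(q)}{\det A_m(q)}.\]
The crucial identification, furnished by \eqref{e:jsp}, is that this quotient is the Schur polynomial $s_\lambda(q)$ with $\lambda=(l-m+1,1^{m-j-1},0^j)$. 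Because $|q_i|\leq M$, we get $|s_\lambda(q)|\leq M^{l-j}\,s_\lambda(1,\dots,1)$, and \eqref{e:njsp} evaluates the monomial count explicitly.

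Inserting $|g^{(j)}(0)|\leq j!\,M_r/r^j$, $|F_j|\geq 1$, $|F_l|\leq\Gamma$, and the Schur estimate reduces the bound on $|S_mg(z)-T_{F,q}g(z)|$ to a double sum over $0\leq j<m\leq l$. The step $1/(l-j)\leq 1/(m-j)$ collapses the inner sum to
\[\sum_{j=0}^{m-1}\frac{1}{(Mr)^j(m-j)!}\leq\frac{e^{Mr}}{(Mr)^m},\]
while the outer sum in $l$ becomes $(M|z|)^me^{M|z|}\leq(M|z|)^me^{Mr}$ for $|z|\leq r$. Multiplying these together with $\Gamma M_r$ yields exactly $\Gamma M_r e^{2Mr}(|z|/r)^m$, and adding this to the first piece gives the stated estimate. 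The main technical work is the double-sum bookkeeping; its feasibility rests entirely on the Schur-polynomial identities collected in Section \ref{S:prelim}, so once those are invoked the remaining manipulations are just of geometric and exponential series.
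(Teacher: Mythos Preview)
Your proposal is correct and follows essentially the same route as the paper: both split $g-T_{F,q}g$ into the Taylor remainder plus a series $\sum_{l\ge m}\frac{F_lz^l}{l!}\sum_{j}(\pm)\frac{g_j}{F_j}\,\frac{\det A_m^{j,l}(q)}{\det A_m(q)}$, then bound the Schur quotients via \eqref{e:jsp}--\eqref{e:njsp} and sum exactly as you describe. The only cosmetic differences are that the paper reaches this expansion via a single $(m{+}1)\times(m{+}1)$ determinantal identity rather than by solving the Vandermonde system for $a_l$, and that your displayed formula for $a_l$ is missing the sign $(-1)^{m-j-1}$---harmless here since only $|a_l|$ is used.
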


\begin{proof} Let us denote the derivatives of $g$ at $0$ by $g_n=g^{(n)}(0)$.

\par We start by deriving a formula for $T_{F,q}g$ in terms of fundamental Pad\'e interpolation $F$-polynomials. 
For $0\leq j\leq m-1$ let 
\begin{eqnarray*}
T_{F,q}^j(z)&=&\det \begin{bmatrix}1&1&\dots&1\\q_1&q_2&\dots&q_m\\ \vdots&\vdots&\dots&\vdots\\q_1^{j-1}&q_2^{j-1}&\dots&q_m^{j-1}\\q_1^{j+1}&q_2^{j+1}&\dots&q_m^{j+1}\\ \vdots&\vdots&\dots&\vdots\\ q_1^{m-1}&q_2^{m-1}&\dots&q_m^{m-1}\\F(q_1z)&F(q_2z)&\dots&F(q_mz)
\end{bmatrix}\\&=&(-1)^{m-j-1}\det A_m(q)\frac{F_j}{j!}\,z^j+O(z^m).
\end{eqnarray*}
Hence for $0\leq k\leq m-1$,
$$\left.\frac{d^kT_{F,q}^j}{dz^k}\right|_{z=0}=(-1)^{m-j-1}\det A_m(q)F_j\delta_{kj},$$
where $\delta_{kj}$ is the Kronecker delta. It follows that 
$$T_{F,q}g(z)=\sum_{j=0}^{m-1}g_j\,\frac{(-1)^{m-j-1}}{F_j\det A_m(q)}\,T_{F,q}^j(z).$$

\par Observe that 
$$g(z)-T_{F,q}g(z)=\frac{1}{\det A_m(q)}\,\det \begin{bmatrix}1&\dots&1&g_0/F_0\\q_1&\dots&q_m&g_1/F_1\\q_1^2&\dots&q_m^2&g_2/F_2\\\vdots&\vdots&\dots&\vdots\\ q_1^{m-1}&\dots&q_m^{m-1}&g_{m-1}/F_{m-1}\\F(q_1z)&\dots&F(q_mz)&g(z)\end{bmatrix}.$$
Indeed, this is seen by expanding the above determinant using the last column. 

\par Next, writing the functions in the last row in terms of their Taylor series at 0 we obtain:
\begin{eqnarray*}
g(z)-T_{F,q}g(z)&=&\sum_{k=m}^\infty\frac{F_kz^k}{\det A_m(q)k!}\,\det\begin{bmatrix}1&\dots&1&g_0/F_0\\q_1&\dots&q_m&g_1/F_1\\q_1^2&\dots&q_m^2&g_2/F_2\\\vdots&\vdots&\dots&\vdots\\ q_1^{m-1}&\dots&q_m^{m-1}&g_{m-1}/F_{m-1}\\q_1^k&\dots&q_m^k&g_k/F_k\end{bmatrix}\\
&=&\sum_{k=m}^\infty\frac{F_kz^k}{k!}\,\left(\sum_{j=0}^{m-1}(-1)^{m+j}\frac{g_j}{F_j}\,\frac{\det A_m^{j,k}(q)}{\det A_m(q)}+\frac{g_k}{F_k}\right),
\end{eqnarray*}
where $A_m^{j,k}(q)$ is the generalized Vandermonde matrix defined in (\ref{e:Amk}). 

\par Therefore we have 
$$g(z)-T_{F,q}g(z)=R_mg(z)+S_mg(z),$$
where 
$$R_mg(z)=\sum_{k=m}^\infty\frac{g_k}{k!}\,z^k$$
is the remainder of the Taylor series of $g$ at $0$, and 
$$S_mg(z)=\sum_{j=0}^{m-1}(-1)^{m+j}\frac{g_j}{F_j}\,\sum_{k=m}^\infty\frac{F_kz^k}{k!}\,\frac{\det A_m^{j,k}(q)}{\det A_m(q)}\;.$$

\par Let $C=\max\{|g(z)|:\,|z|=r\}$. By Cauchy's estimates we have 
$$|g_j|\leq j!\,Cr^{-j},\;j\geq0.$$
For $R_mg$ we use the well-known estimate
$$|R_mg(z)|\leq\sum_{k=m}^\infty\frac{|g_k|}{k!}\,|z|^k\leq C\sum_{k=m}^\infty\frac{|z|^k}{r^k}=C\left(\frac{|z|}{r}\right)^m\frac{r}{r-|z|}\;.$$

\par Next, we estimate $S_mg$. Since $|q_j|\leq M$ and $k\geq m$, we see by (\ref{e:jsp}) and (\ref{e:njsp}) that
$$\left|\frac{\det A_m^{j,k}(q)}{\det A_m(q)}\right|\leq\frac{k!\,M^{k-j}}{j!\,(m-j-1)!\,(k-m)!\,(k-j)}\leq\frac{k!\,M^{k-j}}{j!\,(m-j)!\,(k-m)!}\;.$$
Hence using (\ref{e:Fn}),
\begin{eqnarray*}
\left|\sum_{k=m}^\infty\frac{F_kz^k}{k!}\,\frac{\det A_m^{j,k}(q)}{\det A_m(q)}\right|&\leq&\Gamma\sum_{k=m}^\infty\frac{|z|^kM^{k-j}}{j!\,(m-j)!\,(k-m)!}\\
&=&\Gamma\,\frac{|z|^mM^{m-j}}{j!\,(m-j)!}\,\sum_{k=m}^\infty\frac{(M|z|)^{k-m}}{(k-m)!}=\Gamma e^{M|z|}\,\frac{|z|^mM^{m-j}}{j!\,(m-j)!}\;.
\end{eqnarray*}
Using (\ref{e:Fn}) again and the Cauchy estimates for $|g_j|$ we get
\begin{eqnarray*}
|S_mg(z)|&\leq&\Gamma e^{M|z|}|z|^m\sum_{j=0}^{m-1}\frac{|g_j|}{|F_j|}\,\frac{M^{m-j}}{j!\,(m-j)!}
\leq C\Gamma e^{M|z|}|z|^m\sum_{j=0}^{m-1}\frac{r^{-j}M^{m-j}}{(m-j)!}\\
&\leq& C\Gamma e^{M|z|}\left(\frac{|z|}{r}\right)^me^{Mr}\leq C\Gamma e^{2Mr}\left(\frac{|z|}{r}\right)^m.
\end{eqnarray*}
This concludes the proof of the theorem.
\end{proof}

\medskip

\par Next, we obtain sharp estimates for $\varepsilon(q)$ and $t(q)$.

\medskip

\begin{Theorem}\label{T:mt1}  Let $q=(q_1,\dots,q_m)$ be an $m$-tuple of distinct points of $\overline D_M$. Then 
$$t(q)\varepsilon(q)\geq1\;,\;\;\frac{(m-1)!}{\Gamma\gamma(q)}\leq t(q)\leq \frac{e^M(m-1)!}{\gamma(q)}\;,\;\;\frac{e^{-M}\gamma(q)}{(m-1)!}\leq\varepsilon(q)\le \frac{\Gamma e^M\gamma(q)}{(m-1)!}\,.$$
\end{Theorem}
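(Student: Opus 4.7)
My plan is to prove the five inequalities separately. The bound $t(q)\varepsilon(q) \geq 1$ is immediate from the definitions: since $T_{F,q}$ acts as the identity on the subspace of $F$-polynomials with frequencies in $q$ (by uniqueness of the interpolant), the operator norm of its restriction to that subspace of $A(\Delta)$---which equals $1/\varepsilon(q)$ by construction---is bounded above by $t(q)$.

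For the bounds on $t(q)$, I would rewrite the interpolation conditions in matrix form. Setting $g_j = g^{(j)}(0)$, the coefficient vector $c=(c_1,\dots,c_m)$ of $T_{F,q}g$ satisfies $A_m(q)\,c=(g_j/F_j)_{j=0}^{m-1}$. Inverting via Proposition \ref{P:iov} produces an explicit formula for each $c_i$ in terms of the elementary symmetric polynomials $s_{m-k}(q^i)$. Using (\ref{e:Fn}), the Cauchy estimate $|g_j|\leq j!\,\|g\|_\Delta$ on $A(\Delta)$, and $|s_{m-k}(q^i)|\leq\binom{m-1}{m-k}M^{m-k}$, the resulting sum collapses to a truncated exponential series and yields $|c_i|\leq (m-1)!\,e^M/\gamma_i(q)$, hence $t(q)\leq e^M(m-1)!/\gamma(q)$. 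For the matching lower bound I would test with $g(z)=z^{m-1}$, which has $\|g\|_\Delta=1$: only one term survives in the inverted formula, giving $c_i=(m-1)!\,F_{m-1}^{-1}/\prod_{j\neq i}(q_i-q_j)$ and hence $\|T_{F,q}g\|_\infty\geq (m-1)!/(\Gamma\gamma(q))$.

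The upper bound on $\varepsilon(q)$ is the one non-routine step, and I expect this to be the main obstacle. I would construct an explicit near-extremal $F$-polynomial by taking the divided-difference weights $c_i=1/\prod_{j\neq i}(q_i-q_j)$. Identity (\ref{e:csp}) yields $\sum_i c_i q_i^n = h_{n-m+1}(q)$ for $n\geq m-1$, while $\sum_i c_i q_i^n = 0$ for $0\leq n\leq m-2$ (the corresponding generalized Vandermonde matrix has repeated rows), so the Taylor expansion of $f(z)=\sum_ic_iF(q_iz)$ begins at order $m-1$. Estimating $|h_{l}(q)|\leq\binom{l+m-1}{m-1}M^l$ and invoking (\ref{e:Fn}), the series telescopes to $\|f\|_\Delta\leq\Gamma e^M/(m-1)!$, while $\|f\|_\infty=1/\gamma(q)$ by construction; after normalizing this yields $\varepsilon(q)\leq\Gamma e^M\gamma(q)/(m-1)!$. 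The remaining bound $\varepsilon(q)\geq e^{-M}\gamma(q)/(m-1)!$ then falls out immediately from $t(q)\varepsilon(q)\geq1$ and the already-established upper bound on $t(q)$. The essential insight is that the divided-difference weights simultaneously shift the Taylor tail of $f$ to start at $z^{m-1}$ (providing the crucial $1/(m-1)!$ prefactor) while pinning $\|\cdot\|_\infty$ to $1/\gamma(q)$; every other step is bookkeeping with the symmetric polynomial identities (\ref{e:jsp})--(\ref{e:csp}) and the series $\sum_{l\geq0}M^l/l!=e^M$.
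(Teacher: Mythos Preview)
Your proposal is correct and follows essentially the same route as the paper: the same matrix inversion via Proposition~\ref{P:iov} for the $t(q)$ bounds, the same test function $g(z)=z^{m-1}$ for the lower bound, and the same extremal $F$-polynomial for the upper bound on $\varepsilon(q)$. The only cosmetic difference is that the paper writes this extremal $f$ as the determinant in (\ref{e:deff}) whereas you write its coefficients directly as the divided-difference weights $c_i=1/\prod_{j\neq i}(q_i-q_j)$; expanding (\ref{e:deff}) along the last row shows these are the same $F$-polynomial up to the scalar $\det A_m(q)$, and both analyses then reduce to (\ref{e:csp}) and the exponential series.
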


\begin{proof} For the first inequality, let $f(z)=\sum_{j=1}^mc_jF(q_jz)$ with $\|f\|_\infty=1$. Since $f=T_{F,q}f$ we have $1=\|T_{F,q}f\|_\infty\leq t(q)\|f\|_\Delta$, which implies that $t(q)\varepsilon(q)\geq1$.

\medskip

\par We use the notations introduced in Section \ref{S:prelim} and prove now the estimates for $t(q)$. Let $g\in A(\Delta)$ with $\|g\|_\Delta\leq1$. Writing 
$$T_{F,q}g(z)=\sum_{j=1}^mc_jF(q_jz),$$ 
we have 
$$g_l:=g^{(l)}(0)=F_l\sum_{j=1}^mc_jq_j^l,\;0\leq l\leq m-1.$$
Hence
$$\begin{bmatrix}c_1\\ \vdots \\ c_m\end{bmatrix}=A_m(q)^{-1}
\begin{bmatrix}\frac{g_0}{F_0}\\ \vdots \\ \frac{g_{m-1}}{F_{m-1}}\end{bmatrix},\;{\rm or}\;\;
c_i=\sum_{k=1}^m v_{ik}\,\frac{g_{k-1}}{F_{k-1}}\;,$$
where $A_m(q)^{-1}=[v_{ik}]_{1\leq i,k\leq m}$. We use the formulas for $v_{ik}$ from Proposition \ref{P:iov}. Since $s_{m-k}(q_1,\dots,q_{i-1},q_{i+1},\dots,q_m)$ has $\binom{m-1}{m-k}$ terms, each a product of $m-k$ different $q_j$'s, it follows that
$$|v_{ik}|\leq\frac{1}{\gamma_i(q)}\,\binom{m-1}{m-k}\,M^{m-k}.$$
By Cauchy's estimates $|g_{k-1}|\leq (k-1)!$, and by (\ref{e:Fn}) $|F_{k-1}|\geq1$. Hence we obtain 
$$|c_i|\leq\frac{(m-1)!}{\gamma_i(q)}\,\sum_{k=1}^m\frac{M^{m-k}}{(m-k)!}
\leq\frac{(m-1)!}{\gamma(q)}\,e^M.$$
Therefore 
$$\|T_{F,q}g\|_\infty=\max_{1\leq i\leq m}|c_i|\leq\frac{e^M(m-1)!}{\gamma(q)}\,,$$
which yields the upper estimate on $t(q)$.

\par Observe that if $g(z)=z^{m-1}$ then, by above, $T_{F,q}g$ has coefficients 
$$c_i=v_{im}\,(m-1)!/F_{m-1}.$$ 
By Proposition \ref{P:iov}, $|v_{im}|=1/\gamma_i(q)$. Since $|F_n|\leq\Gamma$ we obtain 
$$t(q)\geq\|T_{F,q}g\|_\infty=\max_{1\leq i\leq m}|c_i|\geq\frac{(m-1)!}{\Gamma\gamma(q)}\;.$$

\medskip

\par The lower estimate on $\varepsilon(q)$ follows using $\varepsilon(q)\geq1/t(q)$ and the upper bound for $t(q)$. For the upper estimate on $\varepsilon(q)$ we consider the $F$-polynomial   
\begin{equation}\label{e:deff}
f(z)=\alpha\;\det\begin{bmatrix}1&1&\dots&1\\q_1&q_2&\dots&q_m\\
\vdots&\vdots&\dots&\vdots\\q_1^{m-2}&q_2^{m-2}&\dots&q_m^{m-2}\\F(q_1z)&F(q_2z)&\dots&F(q_mz)
\end{bmatrix},
\end{equation}
where $\alpha>0$ is chosen so that $\|f\|_\infty=1$. Note that $f$ vanishes to order $m-1$ at 0. 
Expanding the determinant using the last row we obtain
$$f(z)=\alpha\sum_{j=1}^m(-1)^{m+j}F(q_jz)\det A_{m-1}(q^j)=\sum_{j=1}^m c_jF(q_jz),$$
where 
$$q^j=(q_1,\dots,q_{j-1},q_{j+1},\dots,q_m),\;c_j=(-1)^{m+j}\alpha\det A_{m-1}(q^j).$$ 
Observe that for each $j$, $|\det A_m(q)|=|\det A_{m-1}(q^j)|\gamma_j(q)$, so 
$$\max_{1\leq j\leq m}|c_j|=\alpha\,\frac{|\det A_m(q)|}{\gamma(q)}\,.$$
Hence $\|f\|_\infty=1$ if we choose $\alpha=\gamma(q)/|\det A_m(q)|$. 

\par Using the Taylor series at 0 of $F(q_jz)$ it follows from (\ref{e:deff}) that 
$$f(z)=\frac{\gamma(q)}{|\det A_m(q)|}\sum_{k=m-1}^\infty\frac{F_k}{k!}\,(\det A_m^{m-1,k}(q))\,z^k,$$
where $A_m^{m-1,k}(q)$ is the generalized Vandermonde matrix given in (\ref{e:Amk}).

\par Equations (\ref{e:Fn}) and (\ref{e:csp}) now imply that 
\begin{eqnarray*}
\varepsilon(q)\leq\|f\|_\Delta&\leq&\Gamma\gamma(q)\sum_{k=m-1}^\infty\frac{|h_{k-m+1}(q)|}{k!}\\
&\leq&\Gamma\gamma(q)\sum_{k=m-1}^\infty\frac{1}{k!}{k\choose{m-1}}M^{k-m+1}
=\frac{\Gamma\gamma(q)}{(m-1)!}\,e^M.
\end{eqnarray*}
This concludes the proof of the theorem.
\end{proof}

\section{Exponential capacity}\label{S:EC}
\par Let $K\subset{\Bbb C}$ be a compact set. We now relate the optimal norm $t_m=t_m(K)$ of the Pad\'e interpolation operators $T_{F,q}$ to the quantities $\varepsilon_m=\varepsilon_m(K)$ and study their asymptotic growth as $m\to+\infty$. We use the notations introduced in Section \ref{S:prelim}. 

\medskip

\begin{Theorem}\label{T:mt2} If $M$ is the minimal radius of a disk centered at 0 and containing $K$ then $1\leq t_m\varepsilon_m\leq\Gamma e^{2M}$. Moreover, 
$$\lim_{m\to+\infty}m\,\varepsilon_m^{1/m}=\lim_{m\to+\infty}m\,t_m^{-1/m}=e\,d(K),$$ 
where $d(K)$ is the transfinite diameter of $K$.
\end{Theorem}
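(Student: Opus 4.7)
The plan is to reduce both $t_m$ and $\varepsilon_m$ to a single extremal quantity $\gamma_m:=\max_{q\in K^m}\gamma(q)$, and then identify the exponential asymptotics of $\gamma_m$ with the transfinite diameter $d(K)$. The main technical point is showing $\gamma_m^{1/m}\to d(K)$; once this is in hand, the rest is bookkeeping with the bounds of Theorem \ref{T:mt1} and Stirling's formula.

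First, by taking $\min_{q\in K^m}$ in the estimates of Theorem \ref{T:mt1} for $t(q)$ and $\max_{q\in K^m}$ in those for $\varepsilon(q)$, I obtain
$$\frac{(m-1)!}{\Gamma\gamma_m}\le t_m\le\frac{e^M(m-1)!}{\gamma_m},\qquad\frac{e^{-M}\gamma_m}{(m-1)!}\le\varepsilon_m\le\frac{\Gamma e^M\gamma_m}{(m-1)!}.$$
The upper bound $t_m\varepsilon_m\le\Gamma e^{2M}$ is immediate from multiplying the right-hand estimates. For $t_m\varepsilon_m\ge1$ I use $t(q)\varepsilon(q)\ge1$ from Theorem \ref{T:mt1}: since $t(q)\ge1/\varepsilon(q)$ for every $q$, $t_m=\min_q t(q)\ge1/\max_q\varepsilon(q)=1/\varepsilon_m$.

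Next, I would establish $\lim_m\gamma_m^{1/m}=d(K)$. The upper estimate follows directly from (\ref{e:gamma}): for every $q\in K^m$, $\gamma(q)^m\le V_m^2$, so $\gamma_m\le V_m^{2/m}$ and $\gamma_m^{1/m}\le V_m^{2/m^2}$, which tends to $d(K)$ because $V_m^{2/(m(m-1))}\to d(K)$ and the exponent $(m-1)/m\to 1$. For the matching lower estimate I test at an array $Q_m=(q_1,\dots,q_m)$ of Fekete points. The Vandermonde factorization
$$|\det A_m(Q_m)|=\gamma_i(Q_m)\,|\det A_{m-1}(q^i)|,\quad q^i=(q_1,\dots,\widehat{q_i},\dots,q_m),$$
together with $|\det A_m(Q_m)|=V_m$ and $|\det A_{m-1}(q^i)|\le V_{m-1}$, yields $\gamma_i(Q_m)\ge V_m/V_{m-1}$ for each $i$; taking the minimum in $i$ and invoking (\ref{e:tau}) gives $\gamma_m\ge\gamma(Q_m)\ge V_m/V_{m-1}\ge\tau_{m-1}^{m-1}$. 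Since $\tau_{m-1}\to d(K)$, one concludes $\gamma_m^{1/m}\ge\tau_{m-1}^{(m-1)/m}\to d(K)$.

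Finally, using Stirling in the form $m/((m-1)!)^{1/m}\to e$, the sandwich for $\varepsilon_m$ yields
$$\frac{e^{-M/m}\,m\gamma_m^{1/m}}{((m-1)!)^{1/m}}\le m\,\varepsilon_m^{1/m}\le\frac{(\Gamma e^M)^{1/m}\,m\gamma_m^{1/m}}{((m-1)!)^{1/m}},$$
where the prefactors $e^{-M/m}$ and $(\Gamma e^M)^{1/m}$ both tend to $1$ and the remaining factor tends to $e\,d(K)$, so $m\,\varepsilon_m^{1/m}\to e\,d(K)$. The corresponding sandwich for $t_m$ gives $m\,t_m^{-1/m}\to e\,d(K)$ by the symmetric computation. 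The principal obstacle is the lower bound on $\gamma_m^{1/m}$, which hinges on the Vandermonde-factorization identity at Fekete points and the classical estimate (\ref{e:tau}) relating $V_m/V_{m-1}$ to the Chebyshev constant $\tau_{m-1}$; the remaining steps are straightforward manipulations of Theorem \ref{T:mt1}, (\ref{e:gamma}), and Stirling's formula.
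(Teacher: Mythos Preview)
Your proof is correct and follows essentially the same route as the paper's. The only cosmetic difference is that you package the argument through the auxiliary quantity $\gamma_m=\max_{q\in K^m}\gamma(q)$ and treat $t_m$ and $\varepsilon_m$ symmetrically, whereas the paper derives the sandwich $\frac{e^{-M}}{(m-1)!}\,\tau_{m-1}^{m-1}\le\varepsilon_m\le\frac{\Gamma e^M}{(m-1)!}\,V_m^{2/m}$ directly and then recovers the limit for $t_m$ from the already-established inequality $1\le t_m\varepsilon_m\le\Gamma e^{2M}$; the substantive steps (Fekete points, the Vandermonde factorization giving $\gamma_i(Q_m)\ge V_m/V_{m-1}$, inequalities (\ref{e:tau}) and (\ref{e:gamma}), and Stirling) are identical.
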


\begin{proof} By Theorem \ref{T:mt1} we have for every $q\in K^m$ that 
$$1\leq t(q)\varepsilon(q)\leq t(q)\varepsilon_m\;,\;\;\Gamma e^{2M}\geq t(q)\varepsilon(q)\geq t_m\varepsilon(q).$$
Hence $1\leq t_m\varepsilon_m\leq\Gamma e^{2M}$.

\par For the limits in the conclusion of the theorem, we will show that 
\begin{equation}\label{e:em}
\frac{e^{-M}}{(m-1)!}\,\tau_{m-1}^{m-1}\leq\varepsilon_m\leq\frac{\Gamma e^M}{(m-1)!}\,V_m^{\frac{2}{m}}\,.
\end{equation}
The theorem then follows by using Stirling's formula and since (see Section \ref{S:prelim})
$$\tau_m\to  d(K),\;V_m^{\frac{2}{m(m-1)}}\to  d(K).$$

\medskip

\par The upper estimate on $\varepsilon_m$ claimed in (\ref{e:em}) follows at once from Theorem \ref{T:mt1},  since for every $q\in K^m$ we have by (\ref{e:gamma}) that $\gamma(q)\leq V_m^{2/m}$.

\par For the lower estimate, let $q=(q_1,\dots,q_m)\in K^m$ be chosen so that $V_m=|\det A_m(q)|$ 
and let $q^1=(q_2,\dots,q_m)$. Then $V_m=\gamma_1(q)|\det A_{m-1}(q^1)|$ and, consequently, 
$\gamma_1(q)\ge V_m/V_{m-1}$. Hence by (\ref{e:tau}), $\gamma_1(q)\ge \tau^{m-1}_{m-1}$. Since by  symmetry the same inequality holds for all $\gamma_i(q)$ we see that $\gamma(q)\ge
\tau^{m-1}_{m-1}$. The lower estimate on $\varepsilon_m$ in (\ref{e:em}) now follows from Theorem \ref{T:mt1}. 
\end{proof}

\medskip

\par Next, we study the distribution of points maximizing $\varepsilon(q)$ and show that it is similar to the distribution of Fekete points.

\begin{Proposition}\label{P:dp} Consider an array of points $Q_m=(q_{1m},\dots,q_{mm})\in K^m$, where $m\geq1$. If $m\,\varepsilon(Q_m)^{1/m}\to  e\,d(K)$ then $|\det A_m(Q_m)|^{\frac2{m(m-1)}}\to  d(K)$. Moreover, in this case the sequence of measures $\mu_m=\frac{1}{m}\,\sum_{i=1}^m\delta_{q_{im}}$ converges weakly to the equilibrium measure of $K$.
\par Conversely, if $(|\det A_m(Q_m)|/V_m(K))^{1/m}\to 1$ then $m\,\varepsilon(Q_m)^{1/m}\to  e\,d(K)$.
\end{Proposition}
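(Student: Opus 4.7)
The plan is to pass through the quantity $\gamma(Q_m)^{1/m}$. By Theorem \ref{T:mt1}, $\varepsilon(Q_m)$ is squeezed between $e^{-M}\gamma(Q_m)/(m-1)!$ and $\Gamma e^M\gamma(Q_m)/(m-1)!$; the $m$th roots of these multiplicative constants tend to $1$, and Stirling gives $m/((m-1)!)^{1/m}\to e$. Therefore both convergence statements involving $\varepsilon(Q_m)$ in the proposition are equivalent to $\gamma(Q_m)^{1/m}\to d(K)$, and the proof reduces to linking this convergence to the determinantal hypotheses. For the forward direction, suppose $\gamma(Q_m)^{1/m}\to d(K)$. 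From (\ref{e:gamma}) I get $\gamma(Q_m)^m\leq|\det A_m(Q_m)|^2\leq V_m^2$, which I rewrite as
$$\gamma(Q_m)^{1/(m-1)}\leq|\det A_m(Q_m)|^{2/(m(m-1))}\leq V_m^{2/(m(m-1))}.$$
Both outer quantities tend to $d(K)$, forcing the middle to converge as well.

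For the converse, I assume $(|\det A_m(Q_m)|/V_m)^{1/m}\to 1$ and aim to show $\gamma(Q_m)^{1/m}\to d(K)$. The basic factorization $|\det A_m(q)|=\gamma_i(q)\,|\det A_{m-1}(q^i)|$, where $q^i$ deletes the $i$th entry, combined with $|\det A_{m-1}(q^i)|\leq V_{m-1}$, yields $\gamma(Q_m)\geq|\det A_m(Q_m)|/V_{m-1}$. Thus
$$\gamma(Q_m)^{1/m}\geq\left(\frac{|\det A_m(Q_m)|}{V_m}\right)^{1/m}\left(\frac{V_m}{V_{m-1}}\right)^{1/m}.$$
The first factor tends to $1$ by hypothesis; for the second, (\ref{e:tau}) gives $\tau_{m-1}^{(m-1)/m}\leq(V_m/V_{m-1})^{1/m}\leq m^{1/m}\tau_{m-1}^{(m-1)/m}$, and since $\tau_{m-1}\to d(K)$ the second factor tends to $d(K)$. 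The matching upper bound $\gamma(Q_m)^{1/m}\leq V_m^{2/m^2}\to d(K)$ is immediate from (\ref{e:gamma}).

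It remains, in the forward direction, to deduce that $\mu_m$ converges weakly to the equilibrium measure $\mu_K$ (assuming $d(K)>0$, otherwise there is no such measure) from the asymptotic Fekete property $|\det A_m(Q_m)|^{2/(m(m-1))}\to d(K)$. This step is the technical heart of the argument and is where I expect the main obstacle to lie, though it is classical. The approach: for any weak-$\ast$ subsequential limit $\mu$ of $\{\mu_m\}$, truncate the kernel $\log(1/|z-w|)$ at height $\log R$ to obtain a continuous bounded function $k_R$ on $\mathbb{C}^2$; weak convergence gives
$$\iint k_R\,d\mu\,d\mu=\lim_m\iint k_R\,d\mu_m\,d\mu_m\leq-\log d(K),$$
where the last inequality uses that the off-diagonal double sum $\frac{1}{m(m-1)}\sum_{i\ne j}\log(1/|q_{im}-q_{jm}|)$ equals $-\log|\det A_m(Q_m)|^{2/(m(m-1))}$ and hence tends to $-\log d(K)$. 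Letting $R\to+\infty$ by monotone convergence, the logarithmic energy of $\mu$ is at most $-\log d(K)$, so by Frostman's uniqueness of the energy minimizer, $\mu=\mu_K$. Since every subsequence has a further subsequence converging to $\mu_K$, $\mu_m\to\mu_K$ weakly, and this last potential-theoretic portion can be cited from standard references (Saff--Totik, Ransford).
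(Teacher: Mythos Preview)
Your argument is correct and follows the same route as the paper: reduce via Theorem \ref{T:mt1} and Stirling to the equivalence $m\,\varepsilon(Q_m)^{1/m}\to e\,d(K)\iff\gamma(Q_m)^{1/(m-1)}\to d(K)$ (you write the exponent as $1/m$, which is harmlessly equivalent), then sandwich with (\ref{e:gamma}) for the forward direction and use the factorization $|\det A_m(Q_m)|=\gamma_i(Q_m)|\det A_{m-1}(Q_m^i)|$ together with (\ref{e:tau}) for the converse. The only difference is that the paper dispatches the weak convergence of $\mu_m$ to the equilibrium measure by citing \cite[Theorem 1.5]{BBCL}, whereas you sketch the underlying energy argument (truncate the logarithmic kernel, pass to the limit, invoke Frostman); your sketch is correct and is essentially what that reference contains, so this is an expansion of a citation rather than a different approach.
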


\begin{proof} Using Stirling's formula, it follows from Theorem \ref{T:mt1} that $m\,\varepsilon(Q_m)^{1/m}\to e\,d(K)$ if and only if $\gamma(Q_m)^{\frac{1}{m-1}}\to  d(K)$. 

\par By (\ref{e:gamma}) we have that 
$$\gamma(Q_m)^{\frac{1}{m-1}}\leq|\det A_m(Q_m)|^{\frac{2}{m(m-1)}}\leq V_m^{\frac{2}{m(m-1)}}.$$

\par If $\gamma(Q_m)^{\frac{1}{m-1}}\to  d(K)$ this implies that 
\[\lim_{m\to\infty}|\det A_m(Q_m)|^{\frac2{m(m-1)}}=d(K).\]
By \cite[Theorem 1.5]{BBCL} the measures $\mu_m$ converge weakly to the equilibrium measure of $K$.

\par Conversely, suppose that the sequence $(|\det A_m(Q_m)|/V_m)^{1/m}$
converges to 1. Since for each $i$, $|\det A_m(Q_m)|=\gamma_i(Q_m)|\det A_{m-1}(q_{1m},\dots,\widehat{q_{im}},\dots,q_{mm})|$, we see that
$$\gamma(Q_m)\ge\frac{|\det A_m(Q_m)|}{V_{m-1}}\;,\;\;{\rm so}\;\;\gamma(Q_m)^{\frac{1}{m-1}}\ge\left(\frac{|\det A_m(Q_m)|}{V_m}\right)^{\frac{1}{m-1}}
\left(\frac{V_m}{V_{m-1}}\right)^{\frac{1}{m-1}}\;.$$
Using (\ref{e:tau}) and (\ref{e:gamma}) we obtain
$$V_m^{\frac{2}{m(m-1)}}\geq \gamma(Q_m)^{\frac{1}{m-1}}\ge\tau_{m-1}\left(\frac{|\det A_m(Q_m)|}{V_m}\right)^{\frac{1}{m-1}},$$
which implies that $\lim_{m\to\infty}\gamma(Q_m)^{\frac{1}{m-1}}=d(K)$. 
\end{proof}

\medskip

\par We conclude this section by looking at the number of zeros of $F$-polynomials. When $F(z)=e^z$ general bounds on the number of zeros of exponential polynomials were obtained by Tijdeman \cite{T} (see also \cite{B}). Here we consider a ``minimal" number of zeros $N_m(K)$ defined in the same spirit as the quantity $\varepsilon_m(K)$. 

\par Given $q=(q_1,\dots,q_m)\in K^m$ so that the $q_j$'s are distinct we let $N(q)$ be the maximal number of zeros in $\Delta$ of the functions $f(z)=\sum_{j=1}^mc_jF(q_jz)$ with $f\not\equiv0$. Then we define 
$$N_m=N_m(K)=\min_{q\in K^m}N(q).$$

\begin{Proposition}\label{P:Nm} Assume $K\subset{\Bbb C}$ is a compact non-polar set. Then $N_m\geq m-1$ for every $m\geq1$, and $\lim_{m\to+\infty}\frac{N_m}{m\log m}=0$.
\end{Proposition}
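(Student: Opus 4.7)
The plan is to dispatch the two assertions separately, with the main work in the upper bound. The lower bound $N_m\ge m-1$ comes essentially for free from the proof of Theorem~\ref{T:mt1}: for any distinct $q=(q_1,\dots,q_m)\in K^m$, the $F$-polynomial constructed in equation (\ref{e:deff}) is nonzero (since $\|f\|_\infty=1$) and vanishes to order $m-1$ at $z=0\in\Delta$, so $N(q)\ge m-1$ and the minimum over $q$ yields $N_m\ge m-1$.

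For the upper bound I would let $Q_m=(q_{1m},\dots,q_{mm})$ be an $m$-tuple of Fekete points of $K$ and bound $N(Q_m)$ uniformly via Poisson-Jensen on $\overline{D_R}$ for a suitable $R>1+\sqrt 2$. Normalize an arbitrary $f(z)=\sum_{j=1}^m c_j F(q_{jm}z)\not\equiv 0$ to have $\|c\|_\infty=1$ and pick $\zeta^*\in\Delta$ with $|\zeta^*|=1$ and $|f(\zeta^*)|=\|f\|_\Delta$; this is nonzero because $\|f\|_\Delta\ge\varepsilon(Q_m)>0$. For every zero $w\in\Delta$ of $f$ the Blaschke factor for $D_R$ satisfies $|R(\zeta^*-w)/(R^2-\overline{w}\zeta^*)|\le 2R/(R^2-1)<1$, while the Blaschke factors from zeros in $D_R\setminus\Delta$ contribute non-positively. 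Applied at $\zeta^*$, Poisson-Jensen therefore yields
\[
\log\|f\|_\Delta\;\le\;N_f(\Delta)\,\log\frac{2R}{R^2-1}+\log M_f(R),
\]
where $M_f(R)=\max_{|z|=R}|f(z)|$, and rearranging gives
\[
N_f(\Delta)\;\le\;\frac{\log\bigl(M_f(R)/\|f\|_\Delta\bigr)}{\log\bigl((R^2-1)/(2R)\bigr)}.
\]

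To close, I would estimate the right-hand side using: $M_f(R)\le m\Gamma e^{MR}$, which follows from $|F(w)|\le\Gamma e^{|w|}$ by (\ref{e:Fn}); and $\|f\|_\Delta\ge\varepsilon(Q_m)\ge e^{-M}\tau_{m-1}^{m-1}/(m-1)!$, which combines Theorem~\ref{T:mt1} with the Fekete lower bound $\gamma(Q_m)\ge\tau_{m-1}^{m-1}$ established in the proof of Theorem~\ref{T:mt2}. Non-polarity of $K$ forces $d(K)>0$, so $\tau_{m-1}\to d(K)>0$ and $-(m-1)\log\tau_{m-1}=O(m)$; Stirling then gives $\log(1/\varepsilon(Q_m))\le m\log m+O(m)$. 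Taking $R=m$, the denominator is $\log((m^2-1)/(2m))=\log m-\log 2+o(1)$, while the numerator is $\log(m\Gamma)+Mm+\log(1/\varepsilon(Q_m))=m\log m+O(m)$, so
\[
N_f(\Delta)\;\le\;m+O\!\left(\frac{m}{\log m}\right)
\]
uniformly in $f$. Hence $N(Q_m)=O(m)$, and in particular $N_m/(m\log m)\to 0$. The proof involves no serious obstacle; the only care required is the bookkeeping in Poisson-Jensen (dropping the harmless contributions of zeros in $D_R\setminus\Delta$ and verifying the Blaschke bound at $|\zeta^*|=1$) and the observation that with $R=m$ the $m\log m$ term in $\log(1/\varepsilon(Q_m))$ is divided by $\log R=\log m$ to produce a finite limit.
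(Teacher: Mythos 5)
Your proof is correct, and its skeleton is the paper's: the lower bound $N_m\ge m-1$ via the $F$-polynomial (\ref{e:deff}) vanishing to order $m-1$ at $0$ is exactly the paper's argument, and the upper bound follows the same strategy of normalizing $\|f\|_\infty=1$, bounding $\|f\|_\Delta$ from below by $\varepsilon$ (via Theorem \ref{T:mt1} and the Fekete-point inequality $\gamma(Q_m)\ge\tau_{m-1}^{m-1}$ from the proof of Theorem \ref{T:mt2}), bounding $\max_{|z|\le R}|f|$ by $m\Gamma e^{MR}$ using (\ref{e:Fn}), and feeding these into a Jensen-type zero-counting inequality. The differences are worth noting: the paper works with a tuple maximizing $\varepsilon(q)$ and quotes \cite{CP1} for the inequality $\bigl(\frac{r^2+1}{2r}\bigr)^{n_f}\le\max_{|z|\le r}|f|$ with $\|f\|_\Delta=1$, keeps the radius $r$ fixed to get $\limsup_m N_m/(m\log m)\le 1/\log\frac{r^2+1}{2r}$, and only then lets $r\to\infty$; you instead run a self-contained Poisson--Jensen argument on $D_R$ and let the radius grow, $R=m$, so that the $m\log m$ in $\log(1/\varepsilon(Q_m))$ is divided by $\log R$. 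This buys a strictly stronger conclusion, $N(Q_m)\le m+O(m/\log m)$ at Fekete tuples, hence $N_m=O(m)$ rather than merely $o(m\log m)$; the only technicalities (attaining $\|f\|_\Delta$ on $|z|=1$, the Blaschke bound $2R/(R^2-1)<1$ for $R>1+\sqrt2$, dropping zeros in $D_R\setminus\Delta$, perturbing $R$ off possible zeros on $|z|=R$) are handled or harmless as you say.
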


\begin{proof} For any $q=(q_1,\dots,q_m)\in K^m$ one can construct, as in the proof of Theorem \ref{T:mt1}, a function $f(z)=\sum_{j=1}^mc_jF(q_jz)$ which vanishes to order $m-1$ at 0. This implies that $N_m\geq m-1$.

\par Fix now $q=(q_1,\dots,q_m)\in K^m$ with $\varepsilon(q)=\varepsilon_m$. Let $f(z)=\sum_{j=1}^mc_jF(q_jz)$ be so that $\|f\|_\Delta=1$. It follows from the definition of $\varepsilon(q)$ that 
$$\|f\|_\infty=\max_{1\leq j\leq m}|c_j|\leq \varepsilon(q)^{-1}=\varepsilon_m^{-1}.$$
If $n_f$ is the number of zeros of $f$ in $\Delta$ and $r>1$ is arbitrary one has that (see e.g. \cite[Theorem 2.2]{CP1} and its proof)
$$\left(\frac{r^2+1}{2r}\right)^{n_f}\leq\max_{|z|\leq r}|f(z)|.$$
Note that by (\ref{e:Fn}), $|F(q_jz)|\leq \Gamma e^{Mr}$, where $M$ is the minimal radius of a disk centered at 0 and containing $K$. Hence $\max_{|z|\leq r}|f(z)|\leq m\Gamma e^{Mr}/\varepsilon_m$. Since $f$ was arbitrary, we obtain using (\ref{e:em}) that 
$$\left(\frac{r^2+1}{2r}\right)^{N(q)}\leq\frac{m\Gamma e^{Mr}}{\varepsilon_m}\leq\frac{\Gamma e^{M(r+1)}m!}{\tau_{m-1}^{m-1}}\,.$$

\par As $N_m\leq N(q)$, it follows that 
$$N_m\,\log\frac{r^2+1}{2r}\leq\log\Gamma+M(r+1)+m\log m-(m-1)\log\tau_{m-1}.$$
Since $K$ is non-polar we have $d(K)>0$. Hence  
$$\log\frac{r^2+1}{2r}\,\limsup_{m\to+\infty}\frac{N_m}{m\log m}\leq1.$$
Letting $r\to\infty$ we conclude that $\lim_{m\to+\infty}\frac{N_m}{m\log m}=0$. 
\end{proof}

\section{Interpolation of Laplace transforms}\label{S:Laplace}

\par We have seen in Theorems \ref{T:PadeR} and \ref{T:mt1} that the Pad\'e-Taylor interpolators $T_{F,q}g$ of a holomorphic function $g$ in the disk $D_R$ converge locally uniformly to $g$ in $D_R$, but their norm $\|T_{F,q}g\|_\infty$ may be very large. We study now the situation when the interpolators have bounded norm. As we shall see, this problem is connected to the range of the Laplace transform. 

\par If $\mu$ is a complex measure on ${\Bbb C}$ with compact support, we define its Laplace transform $L_F\mu$ with respect to $F$ to be the entire function 
$$L_F\mu(z)=\int F(z\zeta)\,d\mu(\zeta)\,,\;z\in{\Bbb C}.$$
It is easy to see that an entire function $g=L_F\mu$ for some complex measure $\mu$ supported in a compact set $K$ if and only if there exists a sequence of $F$-polynomials with frequencies in $K$ which converges locally uniformly to $g$ and is bounded in the norm $\|\cdot\|_1$. 

\par Given an $m$-tuple of distinct points $Q_m=(q_{1m},\dots,q_{mm})$, we denote by 
$$l_i(Q_m,\zeta)=\frac{\prod_{j=1,j\neq i}^m(\zeta-q_{jm})}{\prod_{j=1,j\neq i}^m(q_{im}-q_{jm})},\;1\leq i\leq m,$$
the fundamental Lagrange interpolation polynomials with nodes in $Q_m$. 
 
\begin{Theorem}\label{T:Laplace1}
(i) Let $g$ be a holomorphic function in a disk $D_R$ and assume that there exist $m$-tuples of distinct points $Q_m=(q_{1m},\dots,q_{mm})\in K^m$ so that $\liminf_{m\to+\infty}\|T_{F,Q_m}g\|_1<+\infty$. Then there exists a complex measure $\mu$ supported on $K$ so that 
$g(z)=L_F\mu(z)$ for $z\in D_R$.
Moreover, $T_{F,Q_m}g$ converges locally uniformly on ${\Bbb C}$ to $L_F\mu$. 

\medskip

\par (ii) Conversely, let  $g=L_F\mu$, where $\mu$ is a complex measure supported on $K$, and let  $Q_m=(q_{1m},\dots,q_{mm})\in K^m$ be an $m$-tuple of distinct points. Then 
$$T_{F,Q_m}g(z)=\sum_{i=1}^m\left(\int_Kl_i(Q_m,\zeta)\,d\mu(\zeta)\right)F(q_{im}z)\,.$$

\par Moreover, if $Q_m$ is an $m$-tuple of Fekete points for $K$, then $\|T_{F,Q_m}g\|_\infty\leq|\mu|(K)$.
\end{Theorem}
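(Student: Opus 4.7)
The plan is to prove part (ii) first and then deduce part (i) from a weak-$\ast$ compactness argument. For (ii), I will check that the proposed right-hand side
\[
S(z) = \sum_{i=1}^m \Bigl(\int_K l_i(Q_m,\zeta)\,d\mu(\zeta)\Bigr) F(q_{im}z)
\]
is the Pad\'e interpolant by showing $S^{(k)}(0) = g^{(k)}(0)$ for $0 \leq k \leq m-1$; uniqueness of the interpolant then gives $S = T_{F,Q_m}g$. Differentiating $S$ term-by-term at $0$ produces
\[
S^{(k)}(0) = F_k \int_K \Bigl(\sum_{i=1}^m q_{im}^k\, l_i(Q_m,\zeta)\Bigr) d\mu(\zeta),
\]
while the Laplace-transform formula yields $g^{(k)}(0) = F_k \int_K \zeta^k\, d\mu(\zeta)$. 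The key observation is that the inner sum is the Lagrange interpolant at the nodes $q_{1m},\dots,q_{mm}$ of the polynomial $\zeta \mapsto \zeta^k$; since this polynomial has degree $k \leq m-1$, Lagrange interpolation reproduces it exactly, and the two expressions match.

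For the Fekete-point norm bound, I will invoke the determinantal identity
\[
l_i(Q_m,\zeta) = \frac{\det A_m(q_{1m},\dots,q_{i-1,m},\zeta,q_{i+1,m},\dots,q_{mm})}{\det A_m(Q_m)},
\]
which one checks by expanding the Vandermonde products on both sides. The denominator has modulus equal to $V_m(K)$ by the Fekete property, while the numerator is the Vandermonde determinant of an $m$-tuple in $K^m$ and thus has modulus at most $V_m(K)$. Hence $|l_i(Q_m,\zeta)| \leq 1$ on $K$, and the coefficients $c_{im} = \int_K l_i(Q_m,\zeta)\,d\mu(\zeta)$ satisfy $|c_{im}| \leq |\mu|(K)$, giving the desired bound $\|T_{F,Q_m}g\|_\infty \leq |\mu|(K)$.

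For part (i), the idea is to identify $T_{F,Q_m}g$ with the Laplace transform of the discrete complex measure $\mu_m = \sum_{j=1}^m c_{jm}\,\delta_{q_{jm}}$ on $K$, whose total variation equals $\|T_{F,Q_m}g\|_1$. Along the subsequence on which this norm is bounded, Banach--Alaoglu yields a weak-$\ast$ cluster point $\mu$ in the dual of $C(K)$; since $\zeta \mapsto F(\zeta z)$ is continuous on $K$ for each fixed $z$, a further subsequence $m_k$ satisfies $T_{F,Q_{m_k}}g(z) = L_F\mu_{m_k}(z) \to L_F\mu(z)$ pointwise on $\mathbb{C}$. Combining with Theorem \ref{T:PadeR}, which gives $T_{F,Q_{m_k}}g \to g$ locally uniformly on $D_R$, we conclude $g = L_F\mu$ there. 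The final step, which I expect to be the only delicate point, is upgrading pointwise convergence to locally uniform convergence on all of $\mathbb{C}$: the bound $|T_{F,Q_{m_k}}g(z)| \leq \Gamma e^{M|z|}\|\mu_{m_k}\|$ shows that the subsequence is normal, and any Montel cluster point agrees with the entire function $L_F\mu$ on $D_R$ and hence everywhere by the identity principle, so the full subsequence converges locally uniformly on $\mathbb{C}$ to $L_F\mu$.
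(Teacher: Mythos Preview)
Your proposal is correct and follows essentially the same route as the paper. Part (ii) is identical in substance: the paper phrases the Lagrange-reproduction step via the Taylor polynomial $F_{m-1}(z\zeta)$ (a degree-$(m-1)$ polynomial in $\zeta$ that equals its own Lagrange interpolant), while you check it derivative-by-derivative at $0$; the Fekete bound via the determinantal ratio is the same. For part (i) the weak-$\ast$ compactness argument is exactly the paper's. The only minor difference is the final upgrade to locally uniform convergence on $\mathbb{C}$: the paper observes that once $g=L_F\mu$ is known to be entire, Theorem~\ref{T:PadeR} applies with arbitrarily large $R$ and yields convergence of the \emph{entire} sequence $T_{F,Q_m}g\to L_F\mu$ directly, whereas your Montel/normal-families argument handles the subsequence; the paper's route is slightly cleaner and immediately gives the full-sequence statement.
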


\begin{proof} $(i)$ By passing to a subsequence, we may assume that $\{\|T_{F,Q_m}g\|_1\}_{m\geq1}$ is a bounded sequence. Let $g_m(z)=T_{F,Q_m}g(z)$. We write 
$$g_m(z)=\sum_{i=1}^mc_{im}F(q_{im}z)=L_F\mu_m(z),\;{\rm where}\;\mu_m=\sum_{i=1}^mc_{im}\delta_{im},$$
and $\delta_{im}$ denotes the Dirac mass at $q_{im}$. Since $\mu_m$ is supported on $K$ and $\|T_{F,Q_m}g\|_1=|\mu_m|(K)$ is a bounded sequence, there exists a subsequence $\mu_{m_k}$ which converges weakly to a complex measure $\mu$ supported on $K$. Hence for each $z\in{\Bbb C}$,
$$g_{m_k}(z)=L_F\mu_{m_k}(z)\to L_F\mu(z).$$
By Theorem \ref{T:PadeR} $g_m$ converges to $g$ locally uniformly on $D_R$, so $g=L_F\mu$ on $D_R$. As $L_F\mu$ is entire, Theorem \ref{T:PadeR} implies that $g_m\to L_F\mu$ locally uniformly on ${\Bbb C}$. 

\medskip

\par $(ii)$ Let us write $F(z)=F_{m-1}(z)+O(z^m)$, where $F_{m-1}$ is the Taylor polynomial of $F$ at $0$ of degree $m-1$. For each $z$, $F_{m-1}(z\zeta)$ is a polynomial of degree $m-1$ in $\zeta$, hence
$$F_{m-1}(z\zeta)=\sum_{i=1}^ml_i(Q_m,\zeta)F_{m-1}(q_{im}z).$$
It follows that 
\begin{eqnarray*}
g(z)&=&\int_KF_{m-1}(z\zeta)\,d\mu(\zeta)+O(z^m)\\
&=&\sum_{i=1}^m\left(\int_Kl_i(Q_m,\zeta)\,d\mu(\zeta)\right)F_{m-1}(q_{im}z)+O(z^m)\\
&=&\sum_{i=1}^m\left(\int_Kl_i(Q_m,\zeta)\,d\mu(\zeta)\right)F(q_{im}z)+O(z^m),
\end{eqnarray*}
which implies the formula for $T_{F,Q_m}g$.

\par Assume next that $Q_m$ is an $m$-tuple of Fekete points for $K$. Let $A^i_m(\zeta)$ be the matrix obtained by replacing $q_{im}$  with the independent variable $\zeta$ in the Vandermonde matrix $A_m(Q_m)$. By the definition of Fekete points, we have for each $i$ and $\zeta\in K$ that
$$|l_i(Q_m,\zeta)|=\left|\frac{\det A^i_m(\zeta)}{\det A_m(Q_m)}\right|\leq1,\;\;{\rm so}\; 
\left|\int_Kl_i(Q_m,\zeta)\,d\mu(\zeta)\right|\leq|\mu|(K).$$
This concludes the proof of the theorem. 
\end{proof}

\medskip

\par We consider now the range of the Laplace transform $L_F$, i.e. the class of entire functions $g$ of the form $g=L_F\mu$, where $\mu$ is a complex measure supported on $K$. We start with a simple remark:

\begin{Lemma}\label{L:balayage} For any complex measure $\mu$ supported on $K$ there exists o complex measure $\nu$ supported on the exterior boundary $\partial_eK$ of $K$ such that $L_F\mu=L_F\nu$ and $|\nu|(\partial_eK)\leq|\mu|(K)$.
\end{Lemma}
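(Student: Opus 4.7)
The plan is to use a Hahn--Banach / Riesz representation argument, exploiting the fact that the maximum principle allows one to recover a polynomial on $K$ from its values on the exterior boundary $\partial_eK$. Write $\widehat K$ for the polynomial convex hull of $K$, i.e.\ the complement in $\mathbb C$ of the unbounded component of $\mathbb C\setminus K$; then $\partial\widehat K=\partial_eK$ and $K\subset\widehat K$. Let $P$ denote the linear subspace of $C(\partial_eK)$ consisting of restrictions to $\partial_eK$ of complex polynomials.

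First, I would define the linear functional $\Phi:P\to\mathbb C$ by
$$\Phi(p|_{\partial_eK})=\int_K p(\zeta)\,d\mu(\zeta)\quad\text{for every polynomial }p.$$
The well-definedness is the first key point: if $p,q$ are polynomials agreeing on $\partial_eK$, then $p-q$ is entire and vanishes on $\partial\widehat K$, so the maximum principle applied on each bounded component of $\widehat K^\circ$ forces $p-q\equiv0$ on $\widehat K$ (and in particular on $K$). The bound $|\Phi(p|_{\partial_eK})|\leq|\mu|(K)\|p\|_K\leq|\mu|(K)\|p\|_{\partial_eK}$ follows from the same maximum principle, since $\|p\|_K\leq\|p\|_{\widehat K}=\|p\|_{\partial_eK}$.

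Next, I would extend $\Phi$ to a bounded linear functional on $C(\partial_eK)$ of the same norm by Hahn--Banach, and represent it by a complex Borel measure $\nu$ on $\partial_eK$ via the Riesz representation theorem. By construction $|\nu|(\partial_eK)\leq|\mu|(K)$ and
$$\int_K p\,d\mu=\int_{\partial_eK} p\,d\nu$$
for every polynomial $p$. To conclude $L_F\mu=L_F\nu$, I expand $F(z\zeta)=\sum_{n\geq0}\frac{F_n}{n!}(z\zeta)^n$; for fixed $z$ this series converges uniformly in $\zeta$ on the compact set $K\cup\partial_eK$, so termwise integration against $\mu$ and against $\nu$ is legitimate and, applied to each monomial $\zeta\mapsto\zeta^n$, the two integrals coincide.

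The only delicate point is the well-definedness/maximum-principle step, in particular the implicit assumption that polynomials separate enough points of $\partial_eK$; this is automatic as soon as $\partial_eK$ is infinite, and the reduction to that case is immediate since if $\partial_eK$ is finite then $K=\partial_eK$ and one simply takes $\nu=\mu$. Everything else is bookkeeping with standard functional-analytic tools, so I do not expect any serious technical obstruction.
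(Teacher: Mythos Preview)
Your proof is correct and follows essentially the same Hahn--Banach/Riesz approach as the paper: define a bounded functional on (a subspace of) $C(\partial_eK)$ via integration against $\mu$, use the maximum principle for the norm bound, extend, and represent by a measure $\nu$. The only cosmetic differences are that the paper works on the uniform closure $A(\partial_eK)$ of polynomial restrictions rather than on the polynomials themselves, and does not spell out the finite-$\partial_eK$ edge case or the termwise-integration step for $L_F\mu=L_F\nu$ that you include.
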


\begin{proof} Let $\widehat K$ be the polynomial hull of $K$, so $\partial_eK=\partial\widehat K$. Let $A(\partial_eK)\subset C(\partial_eK)$ be the subspace of complex-valued continuous functions on $\partial_eK$ which are uniform limits of polynomials on $\partial_eK$. Equivalently, $h\in A(\partial_eK)$ if and only if there exists a function $\widetilde h$ continuous on $\widehat K$ and holomorphic on its interior, such that $\widetilde h=h$ on $\partial_eK$. We define the linear functional 
$${\mathcal L}:A(\partial_eK)\longrightarrow{\Bbb C},\;\;{\mathcal L}(h)=\int_K\widetilde h\,d\mu\,.$$
It follows by the maximum principle that 
$$|{\mathcal L}(h)|\leq|\mu|(K)\,\|\widetilde h\|_K=|\mu|(K)\,\|h\|_{\partial_eK}\,.$$
The Hahn-Banach theorem implies that ${\mathcal L}$ extends to a bounded linear functional ${\mathcal L}$ on $C(\partial_eK)$ with $\|{\mathcal L}\|\leq|\mu|(K)$. By the Riesz representation theorem there exists a measure $\nu$ supported on $\partial_eK$ so that ${\mathcal L}(h)=\int h\,d\nu$ for every $h\in C(\partial_eK)$, and $|\nu|(\partial_eK)=\|{\mathcal L}\|\leq|\mu|(K)$. Hence for any function $h$ continuous on $\widehat K$ and holomorphic on its interior we have that 
$$\int_Kh\,d\mu=\int_{\partial_eK}h\,d\nu,$$
and the lemma follows.
\end{proof}

\par An interesting question is the following: given a complex measure $\mu$ on $K$, do there exist $m$-tuples of points $Q_m\in K^m$, $m\geq1$, so that the sequence $\{\|T_{F,Q_m}(L_F\mu)\|_1\}$ is bounded? By Theorem \ref{T:Laplace1}, this means that the sequence 
$$\Lambda(Q_m,\mu)=\sum_{j=1}^m\left|\int_Kl_j(Q_m,\zeta)\,d\mu(\zeta)\right|$$
is bounded. A reasonable choice is to take $Q_m$ to be an $m$-tuple of Fekete points. In the case of the unit disk $K=\Delta$ we show hereafter that Fekete points do not work for all measures. 

\par If $K=\Delta$ the $m$-tuples consisting of the roots of unity of order $m$ are Fekete points, 
$$Q_m=(\zeta_1,\dots,\zeta_m),\;\zeta_j=\exp(2\pi ij/m).$$
It is easy to see that in this case 
$$l_m(Q_m,\zeta)=\frac{1}{m}\,(1+\zeta+\dots+\zeta^{m-1})=\frac{\zeta^m-1}{m(\zeta-1)}\;,\;\;l_j(Q_m,\zeta)=l_m(Q_m,\zeta/\zeta_j)\,.$$
We consider measures $\mu$ supported on the unit circle ${\mathbf T}$ which are absolutely continuous with respect to the Lebesgue measure, so $d\mu=\chi(e^{i\theta})\,d\theta$ with $\chi\in L^1({\mathbf T})$. In this case we write $\Lambda(Q_m,\chi)$ for $\Lambda(Q_m,\mu)$.

\par Let $Abs({\mathbf T})$ denote the space of (continuous) functions $\chi$ on ${\mathbf T}$ whose Fourier series converges absolutely, i.e.
$$\sum_{n=-\infty}^\infty|\hat\chi(n)|<+\infty\,,\;\;\hat\chi(n)=\frac{1}{2\pi}\,\int_0^{2\pi}\chi(e^{i\theta})e^{-in\theta}\,d\theta\,.$$
By a theorem of Bernstein, this space contains the class of H\"older continuous functions with H\"older exponent greater than $1/2$, while by a theorem of Zygmund it contains the class of all H\"older continuous functions with bounded variation (see e.g. \cite[Chapter I]{Kat}).

\begin{Proposition}\label{P:circle} If $Q_m$ are as above and $\chi\in Abs({\mathbf T})$ then $\displaystyle\sup_{m\geq1}\Lambda(Q_m,\chi)<+\infty$. However, one has that $\displaystyle\sup_{m\geq1}\Lambda(Q_m,\chi)=+\infty$ for $\chi$ in a dense $G_\delta$ subset of $L^1({\mathbf T})$.
\end{Proposition}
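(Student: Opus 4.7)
For the first assertion, the plan is a direct computation. The Lagrange polynomials at the $m$-th roots of unity admit the monomial expansion $l_j(Q_m, \zeta) = \frac{1}{m}\sum_{k=0}^{m-1}\zeta_j^{-k}\zeta^k$, so
$$\int_{\mathbf{T}} l_j(Q_m, \zeta)\,\chi(e^{i\theta})\,d\theta = \frac{2\pi}{m}\sum_{k=0}^{m-1}\zeta_j^{-k}\hat\chi(-k).$$
Summing over $j=1,\dots,m$ and applying the triangle inequality yields the $m$-independent bound
$$\Lambda(Q_m,\chi) \leq 2\pi\sum_{k=0}^{m-1}|\hat\chi(-k)| \leq 2\pi\sum_{n\in{\Bbb Z}}|\hat\chi(n)|,$$
which is finite for $\chi \in Abs(\mathbf{T})$.

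For the second assertion, I would use the Uniform Boundedness Principle. Define the bounded linear operators
$$T_m\colon L^1(\mathbf{T}) \longrightarrow \ell^1(\{1,\dots,m\}),\quad (T_m\chi)_j = \int_{\mathbf{T}} l_j(Q_m,\zeta)\,\chi(e^{i\theta})\,d\theta,$$
for which $\|T_m\chi\|_{\ell^1} = \Lambda(Q_m,\chi)$. It suffices to show $\sup_m\|T_m\|_{L^1\to\ell^1} = +\infty$, since Banach--Steinhaus will then produce a dense $G_\delta$ subset of $L^1(\mathbf{T})$ on which $\sup_m\Lambda(Q_m,\chi) = +\infty$. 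To bound $\|T_m\|$ below, I would test with unit-$L^1$ bumps $\chi_\delta = (2\delta)^{-1}\mathbf{1}_{\{|\theta-\theta_0|<\delta\}}$ concentrated near $\zeta_0 = e^{i\theta_0} \in \mathbf{T}$; continuity of each $l_j(Q_m,\cdot)$ gives $(T_m\chi_\delta)_j \to l_j(Q_m,\zeta_0)$ as $\delta\to 0$, so $\|T_m\| \geq \sum_{j=1}^m|l_j(Q_m,\zeta_0)|$. Maximizing over $\zeta_0$ then yields $\|T_m\|\geq L_m$, the Lebesgue constant for Lagrange interpolation at the $m$-th roots of unity.

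The main obstacle is the classical, but non-trivial, fact that $L_m\to+\infty$. I would extract this directly from the closed form $|l_j(Q_m,e^{i\theta})|=|\sin(m\theta/2)|/(m\,|\sin((\theta-\theta_j)/2)|)$ available from the product formula for $l_j$ given in the excerpt: evaluating at the midpoint $\theta=\pi/m$ between two consecutive nodes one has $|\sin(m\theta/2)|=1$, and the resulting sum $\frac{1}{m}\sum_{j=1}^m|\sin((2j-1)\pi/(2m))|^{-1}$ is bounded below, by comparison with the harmonic sum $\sum_j 1/(2j-1)$, by a positive constant multiple of $\log m$. Hence $L_m\geq c\log m$ for some $c>0$, which is all that is needed to finish the proof.
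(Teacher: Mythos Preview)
Your proof is correct, and both parts take a somewhat different route from the paper.

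For the first assertion, the paper carries out the same Fourier computation you do, but instead of stopping at the triangle-inequality bound $\Lambda(Q_m,\chi)\le 2\pi\sum_n|\hat\chi(n)|$, it identifies $\int l_j(Q_m,\zeta)\chi\,d\theta=\frac{2\pi}{m}\chi_m^-(\zeta_j)$ with $\chi_m^-(e^{i\theta})=\sum_{k=0}^{m-1}\hat\chi(-k)e^{-ik\theta}$, and then shows that in fact
\[
\lim_{m\to\infty}\Lambda(Q_m,\chi)=\int_0^{2\pi}|\chi^-(e^{i\theta})|\,d\theta,
\]
a sharper statement that the paper needs for the second part. Your bound is simpler and entirely sufficient for boundedness.

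For the second assertion, both proofs invoke Banach--Steinhaus on the seminorms $\Lambda(Q_m,\cdot)$, but the mechanism for ruling out uniform boundedness differs. You test against approximate Dirac masses to show $\|T_m\|\ge L_m$, the Lebesgue constant at the $m$-th roots of unity, and then estimate $L_m\ge c\log m$ directly from the closed form of $|l_j|$ at a midpoint. The paper instead uses the limit formula above: if $\Lambda(Q_m,\chi)\le C\|\chi\|_{L^1}$ held uniformly, passing to the limit would give $\int|\chi^-|\le C\int|\chi|$ for all $\chi\in Abs(\mathbf T)$, and this is falsified by the explicit family $\chi_r(e^{i\theta})=2\,\re\frac{1}{1-re^{i\theta}}$, for which $\|\chi_r\|_{L^1}$ stays bounded while $\|\chi_r^-\|_{L^1}\to\infty$ as $r\to 1$. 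Your approach is more elementary and ties the phenomenon to the classical growth of Lebesgue constants; the paper's approach yields the extra information that the limit of $\Lambda(Q_m,\chi)$ exists and equals $\|\chi^-\|_{L^1}$ whenever $\chi\in Abs(\mathbf T)$.
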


\begin{proof} Suppose first that $\chi\in Abs({\mathbf T})$. Then 
$$\chi(e^{i\theta})=\sum_{n=-\infty}^\infty\hat\chi(n)e^{in\theta},$$
and the Fourier series converges uniformly to $\chi$ on ${\mathbf T}$. We define 
$$\chi^-(e^{i\theta})=\sum_{n=0}^\infty\hat\chi(-n)e^{-in\theta},\;\chi_m^-(e^{i\theta})=\sum_{n=0}^{m-1}\hat\chi(-n)e^{-in\theta}.$$
Then $\chi^-$ is continuous and $\chi_m^-$ converges uniformly to $\chi^-$ on ${\mathbf T}$. It follows that 
\begin{eqnarray*}
\int_0^{2\pi}l_j(Q_m,e^{i\theta})\chi(e^{i\theta})\,d\theta&=&\frac{1}{m}\,\int_0^{2\pi}\left(\sum_{k=0}^{m-1}\frac{e^{ik\theta}}{\zeta_j^k}\right)\left(\sum_{n=-\infty}^\infty\hat\chi(n)e^{in\theta}\right)d\theta\\
&=&\frac{2\pi}{m}\,\sum_{k=0}^{m-1}\hat\chi(-k)\zeta_j^{-k}=\frac{2\pi}{m}\,\chi_m^-(\zeta_j)\,,
\end{eqnarray*}
hence
$$\left|\Lambda(Q_m,\chi)-\frac{2\pi}{m}\,\sum_{j=1}^m|\chi^-(\zeta_j)|\right|\leq\frac{2\pi}{m}\,\sum_{j=1}^m|\chi_m^-(\zeta_j)-\chi^-(\zeta_j)|\leq2\pi\|\chi_m^--\chi^-\|_{\mathbf T},$$
where $\|\cdot\|_{\mathbf T}$ denotes the uniform norm on ${\mathbf T}$. Therefore the limit exists,
\begin{equation}\label{e:limLa}
\lim_{m\to\infty}\Lambda(Q_m,\chi)=\int_0^{2\pi}|\chi^-(e^{i\theta})|\,d\theta\,.
\end{equation}
This proves the first claim of the proposition.

\par For the second claim, we consider the family of seminorms $\Lambda(Q_m,\cdot)$ on the Banach space $L^1({\mathbf T})$. By Theorem \ref{T:Laplace1} they are bounded, i.e.
$$\Lambda(Q_m,\chi)\leq m\int_0^{2\pi}|\chi(e^{i\theta})|\,d\theta\,.$$
The Banach-Steinhaus theorem (which holds in this setting with the same proof, see e.g. \cite{Rud}) implies that either there exists a constant $C$ so that 
$$\Lambda(Q_m,\chi)\leq C\int_0^{2\pi}|\chi(e^{i\theta})|\,d\theta$$
holds for all $m$ and $\chi\in L^1({\mathbf T})$, or one has that $\displaystyle\sup_{m\geq1}\Lambda(Q_m,\chi)=+\infty$ for $\chi$ in a dense $G_\delta$ subset of $L^1({\mathbf T})$. Assume for a contradiction that the first case holds. By (\ref{e:limLa}), this would imply letting $m\to\infty$ that 
$$\int_0^{2\pi}|\chi^-(e^{i\theta})|\,d\theta\leq C\int_0^{2\pi}|\chi(e^{i\theta})|\,d\theta\,,$$
for every $\chi\in Abs({\mathbf T})$. This inequality is false, as shown by the functions 
$$\chi_r(e^{i\theta})=1+\sum_{n=-\infty}^\infty r^{|n|}e^{in\theta}=2\,\re\frac{1}{1-re^{i\theta}}\,,\;0<r<1.$$
Then $\chi_r^-(e^{i\theta})=1+1/(1-re^{-i\theta})$ and one checks that 
$$\lim_{r\to1}\int_0^{2\pi}|\chi_r^-(e^{i\theta})|\,d\theta=+\infty,\;{\rm while}\;\int_0^{2\pi}|\chi_r(e^{i\theta})|\,d\theta=O(1)\;{\rm as}\;r\to1.$$
This finishes the proof.
\end{proof}

\par In the case of the Dirac mass at a point of ${\mathbf T}$ we have the following estimates.

\begin{Example} If $Q_m$ are as above and $\mu$ is the Dirac mass at $-1$ then $\Lambda(Q_{2k},\mu)=1$, while $\lim_{k\to+\infty}\Lambda(Q_{2k+1},\mu)=+\infty$. Indeed, $\int l_j(Q_m,\zeta)\,d\mu(\zeta)=l_j(Q_m,-1)$. If $m=2k$ then $l_k(Q_{2k},-1)=1$, $l_j(Q_{2k},-1)=0$ for $j\neq k$, so $\Lambda(Q_{2k},\mu)=1$. Assume now $m=2k+1$. Then 
$$l_j(Q_{2k+1},-1)=\frac{2\zeta_j}{(2k+1)(\zeta_j+1)}\,,\;j=1,\dots,2k+1.$$
There exists a constant $C$ so that for all $k$ and all $j=1,\dots,2k+1$ we have 
$$|\zeta_j+1|=\left|\exp\left(2\pi i\left(\frac{j}{2k+1}-\frac{1}{2}\right)\right)-1\right|\leq C\left|\frac{j}{2k+1}-\frac{1}{2}\right|.$$
Thus 
$$\Lambda(Q_{2k+1},\mu)\geq\frac{4}{C}\,\sum_{j=1}^{2k+1}\frac{1}{|2j-2k-1|}\geq\frac{4}{C}\,\sum_{s=0}^k\frac{1}{2s+1}\,.$$
On the other hand, we note that if we replace $Q_m$ with the $m$-tuples $Q'_m$ of roots of order $m$ of $-1$ then $\Lambda(Q'_m,\mu)=1$ for all $m$. 
\end{Example}


\begin{thebibliography}{XXXXX}

\bibitem[B]{B} A. Baker, {\em Transcendental Number Theory}, Cambridge Univ. Press, 1975.

\bibitem[BBCL]{BBCL} T. Bloom, L. Bos, C. Christensen and N. Levenberg, {\em Polynomial interpolation of holomorphic functions in $\mathbb C$ and $\Bbb C^n$}, Rocky Mountain J. Math. {\bf 22} (1992), 441--470.

\bibitem[CP1]{CP1} D. Coman and E. A. Poletsky, {\em Measures of transcendency for entire functions}, Mich. Math. J. {\bf 51} (2003), 575--591.

\bibitem[CP2]{CP2} D. Coman and E. A. Poletsky, {\em Transcendence measures and algebraic growth of entire functions},  Invent. Math.  {\bf 170} (2007),  103--145.

\bibitem[CP3]{CP3} D. Coman and E. A. Poletsky, {\em Polynomial estimates, exponential curves and Diophantine approximation},  Math. Res. Lett.  {\bf 17} (2010),  1125--1136.

\bibitem[Ge]{Ge} A. O. Gelfond, {\em The problem of the representation and uniqueness of an entire analytic function of first order}, Usp. Mat. Nauk {\bf 3} (1937), 144--174.

\bibitem[Go]{G} G. M. Goluzin, {\em Geometric Theory of Functions of a Complex Variable}, Translations of Mathematical Monographs, {\bf 26}, American Mathematical Society, Providence, R.I., 1969.

\bibitem[HNP]{HNP} S. V. Hruschev, N. K. Nikolskii and B. S. Pavlov, {\em Unconditional bases of exponentials and of reproducing kernels}, Complex analysis and spectral theory, 214--335, Lecture Notes in Math. 864, Springer, Berlin-New York, 1981.

\bibitem[K]{Kat} Y. Katznelson, {\em An Introduction to Harmonic Analysis}, Dover Publications, Inc., 1976.

\bibitem[L]{Le} B. Ya. Levin, {\em Distribution of Zeros of Entire Functions}, Translations of Mathematical Monographs, {\bf 5}, American Mathematical Society, Providence, R.I., 1964.

\bibitem[MS]{MS58} N. Macon and A. Spitzbart, {\em Inverses of Vandermonde matrices}, Amer. Math. Monthly {\bf 65} (1958), 95--100.

\bibitem[Ma]{Man} L. Manivel, {\em Symmetric Functions, Schubert Polynomials and Degeneracy Loci} SMF/AMS Texts and Monographs, Volume 6, 2001.

\bibitem[R]{Rud} W. Rudin, {\em Real and Complex Analysis}, 3rd ed., McGraw-Hill, 1987.

\bibitem[T]{T} R. Tijdeman, {\em On the number of zeros of general exponential polynomials}, Indag. Math. {\bf 37} (1971), 1--7.

\end{thebibliography}
\end{document}